\def\ba{\begin{array}}
	\def\ea{\end{array}}
\def\baa{\begin{align}}
	\def\eaa{\end{align}}
\newcommand{\bsq}{\begin{subequations}}
	\newcommand{\esq}{\end{subequations}}
\newcommand{\beq}{\begin{equation}}
\newcommand{\eeq}{\end{equation}}
\newcommand{\bq}{\begin{eqnarray}}
\newcommand{\eq}{\end{eqnarray}}
\newcommand{\bqn}{\begin{eqnarray*}}
	\newcommand{\eqn}{\end{eqnarray*}}
\newcommand{\bee}{\begin{enumerate}}
	\newcommand{\eee}{\end{enumerate}}
\newcommand{\bi}{\begin{itemize}}
	\newcommand{\ei}{\end{itemize}}
\newcommand{\wang}[1]{\ifthenelse{\boolean{showcomments}}
	{ \textcolor[rgb]{1,0,1}{(ZW:  #1)}}{}}
\newcommand{\fliu}[1]{\ifthenelse{\boolean{showcomments}}
	{ \textcolor{red}{(FL:  #1)}}{}}
\newcommand{\slow}[1]{\ifthenelse{\boolean{showcomments}}
	{ \textcolor{blue}{(SL:  #1)}}{}}
\newtheorem{theorem}{Theorem}
\newtheorem{lemma}[theorem]{Lemma}
\newtheorem{remark}{Remark}
\newtheorem{assumption}{\textit{Assumption}}}
\journal{xxx}
\begin{document}
	
\graphicspath{{Paper_Fig/}}

\begin{frontmatter}
	\title{Exponential Stability of Partial Primal-Dual Gradient Dynamics \\with Nonsmooth Objective Functions }
	
	\tnotetext[mytitlenote]{This work was supported  by the National Natural Science Foundation of China ( No. 51677100, U1966601, U1766206).}
		
	\author[thu]{Zhaojian~Wang}
	\author[thu]{Wei Wei}
	\author[CUHK]{Changhong Zhao}
	\author[thu]{Zetian Zheng}
	\author[thu]{Yunfan Zhang}
	\author[thu]{Feng~Liu\corref{mycorrespondingauthor}}\ead{lfeng@mail.tsinghua.edu.cn}
	\cortext[mycorrespondingauthor]{Corresponding author}
	
	\address[thu]{State Key Laboratory of Power Systems, Department of Electrical Engineering, Tsinghua University, Beijing 100084, China}
	\address[CUHK]{Department of Information Engineering, The Chinese University of Hong Kong, Hong Kong}

\begin{abstract}
	In this paper, we investigate the continuous time partial primal-dual gradient dynamics (P-PDGD) for solving convex optimization problems with the form $ \min\limits_{x\in X,y\in\Omega}\ f({x})+h(y),\ \textit{s.t.}\ A{x}+By=C $, where $ f({x}) $ is strongly convex and smooth, but $ h(y) $ is strongly convex and non-smooth. Affine equality and set constraints are included. We prove the exponential stability of P-PDGD, and bounds on decaying rates are provided. Moreover, it is also shown that the decaying rates can be regulated by setting the stepsize.
\end{abstract}

\begin{keyword}
	Nonsmooth optimization, partial primal-dual gradient dynamics, Clark generalized gradient, exponential stability
\end{keyword}
\end{frontmatter}

\section{Introduction}
The primal-dual gradient dynamics (PDGD) (or saddle--point dynamics) were first introduced in \cite{arrow1958studies,kose1956solutions} and have been widely used in obtaining the primal-dual solutions of a convex (or concave) optimization problem. The partial primal-dual gradient dynamics (P-PDGD) is similar to PDGD, which is first proposed in \cite{Li:Connecting} to solve specific types of optimization problems with separable decision variables, i.e., $ \min f({x})+h(y), \ \text{s.t.}\  A{x}+By=C$. 
Typical applications of PDGD and P-PDGD include power systems \cite{Changhong:Design, Li:Connecting, mallada2017optimal, wang2019unified}, wireless communication \cite{chiang2007layering}, distributed optimization \cite{Yi:Distributed} and seeking the Nash Equilibrium in game \cite{gharesifard2013distributed}. 

Despite its wide applications, general theoretical studies of PDGD and P-PDGD are focused on its asymptotic stability analysis \cite{Feijer:Stability,Cherukuri:Asymptotic,cherukuri2018role,holding2014convergence} and exponential stability analysis \cite{cortes2019distributed,niederlander2016exponentially,dhingra2018proximal,qu2018exponential, tang2019semi,chen2019exponential, liang2019exponential,bansode2019exponential}.
In the existing literature, the global asymptotic stability of the PDGD under different settings is investigated. In \cite{Feijer:Stability}, the projection is proposed to address the inequality constraints and the PDGD is modeled as a hybrid automaton. This will result in the discontinuity of the right-hand side of primal-dual dynamics and bring difficulties in the convergence proof. Then, \cite{Cherukuri:Asymptotic} improves the convergence proof by using the invariance principle for discontinuous Caratheodory systems, which are further extended in \cite{cherukuri2018role} by using a weaker assumption to show global asymptotic stability for locally strong convex-concave Lagrangian. Extensions are given to subgradient methods in \cite{holding2014convergence}, which constrain the dynamics to a convex domain. 

Exponential stability is a desirable property of a dynamic system both theoretically and in practice. In many practical systems like the power grid, it is better to have stronger stability guarantees. In addition, a discrete-time iterative algorithm can be obtained from an exponentially stable dynamics through explicit Euler discretization, which achieves linear convergence for sufficiently small step sizes \cite{stuart1994numerical,stetter1973analysis}. 
For PDGD to solve constrained convex optimization, its locally exponential stability can be obtained by investigating spectral bounds of saddle matrices \cite{benzi2005numerical}. Regarding the global exponential stability, in \cite{qu2018exponential}, the augmented PDGD is introduced to solve convex optimization with affine inequality constraints, where the exponential stability is proved. The method is further extended in \cite{tang2019semi} for convex optimization problems with convex inequality constraints, and in \cite{chen2019exponential} for convex optimization problems with partially nonstrongly convex objective functions. In \cite{liang2019exponential}, PDGD is used to solve the distributed optimization problem with nonstrongly convex objectives, where the metrically subregular condition is adopted to prove the exponential stability. In \cite{bansode2019exponential}, a projected dynamics is proposed to solve a convex optimization problem with set and linear inequality constraints, where the exponential stability is proved on a Riemannian manifold. 
To solve nonsmooth convex optimization problems, Clark generalized gradients are adopted to replace the regular gradients. In \cite{niederlander2016exponentially,cortes2019distributed}, saddle-point-like dynamics is proposed and proved to be globally exponentially stable with equality constraints. It is further improved in \cite{dhingra2018proximal} to consider affine inequality constraints. 

This work studies the exponential stability of P-PDGD to solve optimization problems with strongly convex and non-smooth objective functions. Affine equality and set constraints are included. Compared with existing literature \cite{cortes2019distributed,niederlander2016exponentially,dhingra2018proximal,qu2018exponential, tang2019semi,chen2019exponential, liang2019exponential}, the main difference is that set constraints are considered. This is very important in practice. In many cases, set constraints are hard limits and even need to be satisfied during transient process. To this end, the projection onto the tangent cone of the set is adopted. This is also different from \cite{bansode2019exponential}, where the projection onto the set itself is used. However, the method in \cite{bansode2019exponential} does not apply to optimization problems with nonsmooth objectives. As pointed out in \cite{zeng2018distributed}, the projection of a subdifferential set cannot guarantee the existence of trajectories. This paper is also partially motivated by \cite{li2020exponentially}, which designs an exponentially convergent algorithm for the consensus problem. In this work, the optimization problem is more general and dual variables are explicitly included. 

\textit{Notations}: In this paper, use $\mathbb{R}^n$ to denote the $n$-dimensional  Euclidean space. For a column vector $ x\in \mathbb{R}^n$ (matrix $A\in \mathbb{R}^{m\times n}$), $ x^{\mathrm{T}}$($A^{\mathrm{T}}$) denotes its transpose. For vectors $ x, y\in \mathbb{R}^n$, $ x^{\mathrm{T}} y=\left\langle  x, y \right\rangle$ denotes the inner product of $ x, y$. $\left\| x \right\|=\sqrt{ x^{\mathrm{T}} x}$ denotes the Euclidean norm of $ x$. For any real symmetric matrices $P$ and $Q$, $P \succeq Q$ and $Q \preceq P$ mean that $P-Q$ is positive semidefinite.

\section{Preliminaries}\label{Preliminaries}
\subsection{Convex analysis}
Let $\Omega \subset \mathbb{R}^{n}$ be a convex set. Let $ f(x):\Omega\rightarrow\mathbb{R} $ be a locally Lipschitz continuous function and denote its Clarke generalized gradient by $\partial f( x)$ \cite[Page 27]{clarke:optimization}. For a $\mu$-strongly convex function $ f( x):\mathbb{R}^n\rightarrow\mathbb{R} $, we have $ (g_{ x}-g_{ y})^{\mathrm{T}}( x- y)\ge \mu\|x- y\|^{2},\ \forall  x,  y \in \Omega$, for all $g_{ x}\in \partial f( x)$ and $g_{ y}\in \partial f( y)$.

For $x \in \Omega,$ the tangent cone to $\Omega$ at $x$ is defined as
$$
\mathcal{T}_{\Omega}(x) \triangleq\left\{\lim _{k \rightarrow+\infty} \frac{x_{k}-x}{\tau_{k}} | x_{k} \in \Omega, x_{k} \rightarrow x, \tau_{k}>0, \tau_{k} \rightarrow 0\right\}
$$

The normal cone to $\Omega$ at $x$ is defined by
$$\mathcal{N}_\Omega(x)=\{ v|\left\langle  v,  y- x\right\rangle\le 0, \forall  y\in \Omega\}$$

By \cite{brogliato2006equivalence}, the tangent cone is the polar of the normal cone, which implies
$$ T_\Omega(x) = \{y\in R^n | \langle s,y\rangle  \le 0, \forall s \in N_\Omega (x) \} $$

Define the projection of $x$ onto $\Omega$ as 
\begin{eqnarray}
\label{def_projection}
\mathcal{P}_{\Omega}( x)=\arg \min\nolimits_{ y\in \Omega}\left\| x- y \right\|
\end{eqnarray}
We have 
\begin{align}
	x-\mathcal{P}_{\Omega}(x) \in \mathcal{N}_{\Omega}(x)
\end{align}

By \cite{brogliato2006equivalence}, the projection onto $ \mathcal{T}_{\Omega}(x) $ is computed by
\begin{align}\label{Compute_tangent}
	P_{T_{\Omega}(x)}(y)&=\lim _{\delta \rightarrow 0} \frac{P_{\Omega}(x+\delta y)-x}{\delta}\nonumber\\
	&=y-\varpi z^{*}
\end{align}
where $\varpi=\max \left\{0, \langle y,z^{*}\rangle\right\},$ and $z^{*}=\arg\max\nolimits_{\|z\|=1, z \in \mathcal{N}_{\Omega}(x)}\langle y, z\rangle$.

\subsection{Differential inclusion}
Following \cite{aubin2012differential}, a differential inclusion is given by
\begin{align}\label{Differential_inclusion}
	\dot{x}(t) \in \mathcal{F}(x(t)), x(0)=x_{0}, t \geq 0
\end{align}
where $\mathcal{F}$ is a set-valued map from points in $\mathbb{R}^{n}$ to a nonempty, compact, convex subset of $\mathbb{R}^{n}$. For each $x \in \mathbb{R}^{n},$ system \eqref{Differential_inclusion} specifies a set of possible evolutions instead of a single one. A solution of \eqref{Differential_inclusion} defined on $[0, \tau] \subset[0, \infty)$ is an absolutely continuous function $x:[0, \tau] \rightarrow \mathbb{R}^{n}$ such that \eqref{Differential_inclusion} holds for almost all $t \in[0, \tau]$ for $\tau>0$. The solution $t \mapsto x(t)$ to \eqref{Differential_inclusion} is a right maximal solution if it cannot be extended in time. Suppose that all the right maximal solutions to \eqref{Differential_inclusion} exists on $[0,+\infty)$. If $0_{m} \in \mathcal{F}\left(x^*\right)$, then $x^*$ is an equilibrium point of \eqref{Differential_inclusion}.

By \cite[Proposition 2 in p. 266, and Theorem in p. 267]{aubin2012differential}, we have 
\begin{lemma}\label{Equivalence}
	Let $\Omega$ be a closed convex subset of $\mathbb{R}^{n}$, and $\mathcal{F}$ be a map with non-empty compact value from $\Omega$ to $\mathbb{R}^{n}$. Consider two differential inclusions given by
	\begin{subequations}
		\setlength{\abovedisplayskip}{4pt}	
		\setlength{\belowdisplayskip}{4pt}
		\begin{align}\label{Normal_cone_pro}
		&\dot{x}(t) \in \mathcal{F}(x(t))-\mathcal{N}_{\Omega}(x(t)), x(0)=x_{0}\\
		\label{Tangent_cone_pro}
		&\dot{x}(t) \in P_{\mathcal{T}_{\Omega}}[\mathcal{F}(x(t))], \quad x(0)=x_{0}
		\end{align}
	The trajectory x(t) is a solution of \eqref{Normal_cone_pro} if and only if it is a solution of \eqref{Tangent_cone_pro}. Moreover, if $\mathcal{F}$ is also upper semi-continuous and bounded, there exists a solution to dynamics \eqref{Normal_cone_pro}.
	\end{subequations}
\end{lemma}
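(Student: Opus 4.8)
The plan is to prove the equivalence of \eqref{Normal_cone_pro} and \eqref{Tangent_cone_pro} by a purely pointwise comparison of their right-hand sides along any absolutely continuous curve that stays in $\Omega$, using Moreau's decomposition of a vector along the mutually polar cones $\mathcal{T}_\Omega(x)$ and $\mathcal{N}_\Omega(x)$, and then to obtain existence by invoking the viability-type results of \cite{aubin2012differential}. First note that, by construction, every solution of \eqref{Normal_cone_pro} or \eqref{Tangent_cone_pro} takes values in $\Omega$: the term $-\mathcal{N}_\Omega(x(t))$ is only defined for $x(t)\in\Omega$, and in \eqref{Tangent_cone_pro} the inclusion $\dot x(t)\in\mathcal{T}_\Omega(x(t))$ together with convexity of $\Omega$ keeps the trajectory feasible. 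Fix $x\in\Omega$ and $v\in\mathbb{R}^{n}$. Since $\mathcal{T}_\Omega(x)$ and $\mathcal{N}_\Omega(x)$ are closed convex cones polar to each other, Moreau's theorem gives the orthogonal splitting $v=P_{\mathcal{T}_\Omega(x)}(v)+P_{\mathcal{N}_\Omega(x)}(v)$ with $\langle P_{\mathcal{T}_\Omega(x)}(v),P_{\mathcal{N}_\Omega(x)}(v)\rangle=0$, and this is the \emph{only} way to write $v=a+b$ with $a\in\mathcal{T}_\Omega(x)$, $b\in\mathcal{N}_\Omega(x)$ and $\langle a,b\rangle=0$. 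In particular $P_{\mathcal{T}_\Omega(x)}(v)=v-P_{\mathcal{N}_\Omega(x)}(v)\in v-\mathcal{N}_\Omega(x)$, so $P_{\mathcal{T}_\Omega}[\mathcal{F}(x)]\subseteq\mathcal{F}(x)-\mathcal{N}_\Omega(x)$ for every $x\in\Omega$; hence every solution of \eqref{Tangent_cone_pro} is automatically a solution of \eqref{Normal_cone_pro}.

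For the converse, let $x(\cdot)$ solve \eqref{Normal_cone_pro} and pick a time $t$ at which $x$ is differentiable and the inclusion holds (a.e.\ $t$); write $\dot x(t)=f(t)-n(t)$ with $f(t)\in\mathcal{F}(x(t))$ and $n(t)\in\mathcal{N}_\Omega(x(t))$. Since $x(s)\in\Omega$ for all $s$ and $\dot x(t)=\lim_{h\to0^{+}}h^{-1}\big(x(t+h)-x(t)\big)$, the definition of the tangent cone gives $\dot x(t)\in\mathcal{T}_\Omega(x(t))$. Applying the normal-cone inequality $\langle n(t),\xi-x(t)\rangle\le0$ with the feasible points $\xi=x(t+h)$ and $\xi=x(t-h)$ for $h>0$ and dividing by $\pm h$, differentiability at $t$ yields $\langle n(t),\dot x(t)\rangle\le0$ and $\langle n(t),\dot x(t)\rangle\ge0$, hence $\langle n(t),\dot x(t)\rangle=0$. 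Thus $f(t)=\dot x(t)+n(t)$ is precisely the orthogonal splitting above, and uniqueness of Moreau's decomposition forces $\dot x(t)=P_{\mathcal{T}_\Omega(x(t))}(f(t))\in P_{\mathcal{T}_\Omega}[\mathcal{F}(x(t))]$. As this holds for a.e.\ $t$, $x(\cdot)$ solves \eqref{Tangent_cone_pro}, completing the equivalence.

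It remains to prove existence when $\mathcal{F}$ is additionally upper semicontinuous and bounded. Here I would invoke the cited theorem of \cite{aubin2012differential} directly: its hypotheses on $\Omega$ (closed, convex) and on $\mathcal{F}$ are exactly the present ones, and the relevant tangential condition $\big(\mathcal{F}(x)-\mathcal{N}_\Omega(x)\big)\cap\mathcal{T}_\Omega(x)\neq\emptyset$ holds at $P_{\mathcal{T}_\Omega(x)}(f)$ for any $f\in\mathcal{F}(x)$, by the first paragraph; the proof of that theorem is what absorbs the unboundedness of the normal-cone term (through a truncation / minimal-selection argument) and delivers an absolutely continuous solution of \eqref{Normal_cone_pro}. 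Global existence on $[0,+\infty)$ then follows since, in the equivalent form \eqref{Tangent_cone_pro}, $\|\dot x(t)\|=\|P_{\mathcal{T}_\Omega(x(t))}(f(t))\|\le\|f(t)\|$ is uniformly bounded, so the solution cannot blow up in finite time and extends to all $t\ge0$.

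I expect the converse direction to be the delicate point: a priori the pointwise sets $\mathcal{F}(x)-\mathcal{N}_\Omega(x)$ and $P_{\mathcal{T}_\Omega}[\mathcal{F}(x)]$ are genuinely different (the former is unbounded), so the reduction cannot be a mere set identity — it is feasibility of the \emph{entire} trajectory, exploited via the two one-sided difference quotients to force $\langle n(t),\dot x(t)\rangle=0$, that collapses the larger set onto the projected one. A secondary point of care is the existence statement itself, since the unboundedness of $\mathcal{F}(x)-\mathcal{N}_\Omega(x)$ prevents a direct appeal to the elementary existence theorem for upper semicontinuous differential inclusions with compact convex values, and one must rely on the viability/sweeping-process machinery of \cite{aubin2012differential}.
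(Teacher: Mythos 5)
Your proof is correct and is essentially the paper's own approach: the paper offers no argument of its own but simply cites Proposition 2 (p.~266) and the Theorem (p.~267) of \cite{aubin2012differential}, and your two steps --- Moreau's decomposition along the mutually polar cones $\mathcal{T}_\Omega(x)$, $\mathcal{N}_\Omega(x)$ to get $P_{\mathcal{T}_\Omega}[\mathcal{F}(x)]\subseteq\mathcal{F}(x)-\mathcal{N}_\Omega(x)$, and the two one-sided difference quotients forcing $\langle n(t),\dot{x}(t)\rangle=0$ so that uniqueness of the decomposition yields $\dot{x}(t)=P_{\mathcal{T}_\Omega(x(t))}(f(t))$ --- are exactly the content of that cited proposition, with existence delegated, as in the paper, to the viability theorem. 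The only point worth flagging is cosmetic: the backward quotient is unavailable at $t=0$ (and wherever $x$ fails to be two-sidedly differentiable), but since the inclusions need only hold for almost every $t$ this does not affect the argument.
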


Let $V: \mathbb{R}^{n} \rightarrow \mathbb{R}$ be a locally Lipschitz continuous function and $\partial V$ be the Clarke generalized gradient of $V(x)$ at $x$. The set-valued Lie derivative $\mathcal{L}_{\mathcal{F}} V$ of $V$ with respect to \eqref{Differential_inclusion} is defined as $\mathcal{L}_{\mathcal{F}} V(x) \triangleq\left\{a \in \mathbb{R}: a=p^{\mathrm T} v, p \in \partial V(x), v \in \mathcal{F}(x)\right\}$.

\section{Problem Formulation and Algorithm Design}\label{Problem_Formulation}
\subsection{Optimization problem}
The optimization problem is 
\begin{subequations}
	\setlength{\abovedisplayskip}{4pt}	
	\setlength{\belowdisplayskip}{4pt}
	\label{OLC}     
	\begin{align}
	\min\limits_{x\in X,y\in\Omega}\quad& f({x})+h(y)
	\label{OLC_1}
	\\ 
	\text{s.t.} \quad	&A{x}+By=C
	\end{align}
\end{subequations}
where $X\subset\mathbb{R}^n$, $ \Omega\subset\mathbb{R}^m $ are compact convex sets. $A\in\mathbb{R}^{p\times n}, B\in \mathbb{R}^{p\times m}, C\in\mathbb {R}^{p}$ are constant matrices.
We make following assumptions.
\begin{assumption}\label{Assumption_Slater}
	The Slater's condition  \cite[Chapter 5.2.3]{boyd2004convex} of \eqref{OLC} holds.
\end{assumption}
\begin{assumption}\label{Assumption_f}
	For some $ \alpha >0  $, $ f(x) $ is $\alpha$-strongly convex and twice differentiable, i.e., $ \nabla^{2} f(x) \succeq \alpha I$.
\end{assumption}
\begin{assumption}\label{Assumption_g}
	The function $ h(y) $ is Lipschitz continuous, and $\beta$-strongly convex on $\Omega$ for some $ \beta>0 $, that is,
	$
	\langle y_1-y_2, g_{h}(y_1)-g_{h}(y_2)\rangle \geq \beta\|y_1-y_2\|^{2}, \ \forall y_1, y_2 \in \Omega
	$,
	where $g_{h}(y_1) \in \partial h(y_1)$ and $g_{h}(y_2) \in \partial h(y_2)$.
\end{assumption}
It should be noted that $h(y)$ could be nonsmooth.
\begin{assumption}\label{Assumption_A}
	The  matrix $A$ has full row rank and $\kappa_{1}I \preceq A A^\mathrm{T}$ for some $\kappa_{1}>0$.
\end{assumption}

\begin{remark}
	Many practical problems have the same type as problem \eqref{OLC}. For example,  distributed frequency control problem in power systems  \cite{Li:Connecting,Distributed_I:Wang,Distributed_II:Wang}, and distributed voltage control in distribution networks \cite{liu2018hybrid, wang2019asynchronous} can be generalized as \eqref{OLC}. Thus, the result in this paper can be applied to many problems in practice. 
\end{remark}

\subsection{Algorithm Design}
The Lagrangian of \eqref{OLC} is 
\begin{align}\label{Lagrangian}
	\mathop{L(x, y, \lambda)}\limits_{x\in X, y\in\Omega} & = f({x})+h(y)+\lambda^\mathrm{T}(A x+B y-C) 
\end{align}
where $ \lambda\in\mathbb{R}^p $ is the Lagrangian multiplier vector.
The partial primal-dual gradient algorithm is 
\begin{subequations}
	\setlength{\abovedisplayskip}{4pt}	
	\setlength{\belowdisplayskip}{4pt}
	\label{Controller}
	\begin{align}
	\label{Controller1}
	x&=\arg \min _{x\in X}\left\{f(x)+\lambda^\mathrm{T} A x\right\}  \\
	\label{Controller2}
	\dot y &\in P_{\mathcal{T}_{\Omega}\left(y \right)}\left(-\partial h\left(y \right)-B^\mathrm{T}\lambda\right)\\
	\dot\lambda&=Ax+By-C
	\end{align}
\end{subequations}

Define functions
\begin{align}
	\varphi(\lambda) & \triangleq \min _{x\in X}\left\{f(x)+\lambda^\mathrm{T} A x\right\} \\ 
	\label{Partial_Lagrangian}
	\mathop	{\hat{L}(y, \lambda)}\limits_{y\in\Omega} & \triangleq \varphi(\lambda)+h(y)+\lambda^\mathrm{T}(B y-C) 
\end{align}

Let $ H:=\nabla^{2} f(x) $, and its maximal eigenvalue be $ \alpha_{m} $.	Its inverse $ H^{-1} $ is also positive definite, and the minimal eigenvalue is $ \frac{1}{\alpha_{m}} $.
Then, we have the following result demonstrating properties of $ \varphi(\lambda) $ and $ \hat{L}(y, \lambda) $.
\begin{lemma}\label{Strongly_convex}
	Suppose Assumptions \ref{Assumption_f}, \ref{Assumption_A} hold. The function $\varphi(\lambda)$ is continuously differentiable and	$ \frac{\kappa_1}{\alpha_{m}} $-strongly concave. As a consequence, $ \hat{L}(y, \lambda) $ is strongly concave on $\lambda$.
\end{lemma}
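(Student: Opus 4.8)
\emph{Proof plan.} The statement packages three facts about the marginal function $\varphi(\lambda)=\min_{x\in X}\{f(x)+\lambda^{\mathrm T}Ax\}$ — concavity, continuous differentiability, and a strong-concavity modulus equal to $\kappa_1/\alpha_m$ — and then transfers the last to $\hat L$, which as a function of $\lambda$ is $\varphi$ plus the affine term $\lambda^{\mathrm T}(By-C)$. For each fixed $x$ the map $\lambda\mapsto f(x)+\lambda^{\mathrm T}Ax$ is affine, so $\varphi$ is a pointwise infimum of affine functions, hence concave; compactness of $X$ and continuity of $f$ guarantee the infimum is attained. By Assumption~\ref{Assumption_f} the inner objective $g_\lambda(x):=f(x)+\lambda^{\mathrm T}Ax$ is $\alpha$-strongly convex on the convex set $X$, so its minimizer $x(\lambda)$ is unique; since a pointwise infimum of affine functions is differentiable wherever the minimizing index is unique, with gradient the corresponding affine slope, this yields $\nabla\varphi(\lambda)=Ax(\lambda)$ (equivalently, Danskin's theorem).

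For continuous differentiability it suffices to show $x(\cdot)$ is continuous, and in fact I would show it is Lipschitz. Writing the first-order optimality of $x_i:=x(\lambda_i)$ as the variational inequality $\langle\nabla f(x_i)+A^{\mathrm T}\lambda_i,\,x-x_i\rangle\ge 0$ for all $x\in X$, substituting $x=x_2$ into the $i=1$ inequality and $x=x_1$ into the $i=2$ inequality and adding gives $\langle A^{\mathrm T}(\lambda_1-\lambda_2),\,x_2-x_1\rangle\ge\langle\nabla f(x_1)-\nabla f(x_2),\,x_1-x_2\rangle\ge\alpha\|x_1-x_2\|^2$, whence Cauchy--Schwarz yields $\|x(\lambda_1)-x(\lambda_2)\|\le(\|A\|/\alpha)\|\lambda_1-\lambda_2\|$. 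Thus $\nabla\varphi=Ax(\cdot)$ is continuous, so $\varphi\in C^1$; the same computation also re-confirms concavity, since $\langle\nabla\varphi(\lambda_1)-\nabla\varphi(\lambda_2),\lambda_1-\lambda_2\rangle=\langle x_1-x_2,A^{\mathrm T}(\lambda_1-\lambda_2)\rangle\le-\alpha\|x_1-x_2\|^2\le 0$.

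For the strong-concavity modulus I would exploit twice differentiability of $f$ together with the spectral data already set up in the text. At the minimizer the stationarity condition $\nabla f(x(\lambda))+A^{\mathrm T}\lambda=0$ holds; differentiating it ($H:=\nabla^2 f(x(\lambda))\succeq\alpha I$ being invertible) gives $\nabla x(\lambda)=-H^{-1}A^{\mathrm T}$, hence $\nabla^2\varphi(\lambda)=-AH^{-1}A^{\mathrm T}$. Since $H\preceq\alpha_m I$ forces $H^{-1}\succeq(1/\alpha_m)I$ and $AA^{\mathrm T}\succeq\kappa_1 I$ by Assumption~\ref{Assumption_A}, we get $AH^{-1}A^{\mathrm T}\succeq(1/\alpha_m)AA^{\mathrm T}\succeq(\kappa_1/\alpha_m)I$, i.e. $\nabla^2\varphi(\lambda)\preceq-(\kappa_1/\alpha_m)I$, which is exactly $(\kappa_1/\alpha_m)$-strong concavity. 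One can also avoid $C^2$ by substituting $A^{\mathrm T}(\lambda_1-\lambda_2)=-(\nabla f(x_1)-\nabla f(x_2))=-\bar H(x_1-x_2)$, with $\bar H:=\int_0^1\nabla^2 f(x_2+t(x_1-x_2))\,dt\preceq\alpha_m I$, into the monotonicity identity above, which gives $\langle\nabla\varphi(\lambda_1)-\nabla\varphi(\lambda_2),\lambda_1-\lambda_2\rangle=-(\lambda_1-\lambda_2)^{\mathrm T}A\bar H^{-1}A^{\mathrm T}(\lambda_1-\lambda_2)\le-(\kappa_1/\alpha_m)\|\lambda_1-\lambda_2\|^2$. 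Finally, adding the affine term $\lambda^{\mathrm T}(By-C)$ leaves the modulus unchanged, so $\hat L(y,\cdot)$ is $(\kappa_1/\alpha_m)$-strongly concave in $\lambda$, as claimed.

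The step I expect to be the main obstacle is precisely the derivative formula for the \emph{constrained} argmin $x(\lambda)$ used in the strong-concavity bound: the identity $\nabla^2\varphi=-AH^{-1}A^{\mathrm T}$ (equivalently the substitution $\nabla f(x(\lambda))=-A^{\mathrm T}\lambda$) relies on the stationarity condition \emph{without} a normal-cone term, i.e. it presumes $x(\lambda)$ interior to $X$. A fully rigorous treatment must therefore either restrict to the face of $X$ active at $x(\lambda)$ and run the estimate on the corresponding subspace, or invoke a structural hypothesis keeping the relevant minimizers interior; by contrast, the concavity and $C^1$ conclusions go through unchanged when $x(\lambda)\in\partial X$.
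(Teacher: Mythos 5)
Your core argument is the same as the paper's: the paper also computes $\nabla\varphi(\lambda)=Ax(\lambda)$ and $\nabla^{2}\varphi(\lambda)=-AH^{-1}A^{\mathrm T}$ (citing Bertsekas, Prop.~6.1.1 and Eq.~(6.9)) and then applies exactly your spectral chain $AH^{-1}A^{\mathrm T}\succeq \frac{1}{\alpha_m}AA^{\mathrm T}\succeq\frac{\kappa_1}{\alpha_m}I$. What you add is worthwhile: a self-contained proof of concavity (infimum of affine functions), a variational-inequality argument giving Lipschitz continuity of $x(\cdot)$ and hence $\varphi\in C^{1}$ where the paper only cites a reference, and a mean-value-Hessian variant that delivers the strong-monotonicity inequality \eqref{Strongly_convex_q} directly without asserting $\varphi\in C^{2}$ --- which is in fact the form the paper actually uses later, so your second route is arguably cleaner than the printed one.

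The obstacle you flag at the end is a genuine gap, and it is present in the paper's own proof as well, not just in your reconstruction. Both arguments substitute the unconstrained stationarity condition $\nabla f(x(\lambda))+A^{\mathrm T}\lambda=0$ (Bertsekas's Eq.~(6.9) is derived under exactly that hypothesis), but here the inner minimization is over the \emph{compact} set $X$, so in general $-\nabla f(x(\lambda))-A^{\mathrm T}\lambda$ only lies in $\mathcal{N}_{X}(x(\lambda))$. This is not a removable technicality: for $\|\lambda\|$ large the minimizer saturates on the boundary of $X$, $x(\cdot)$ becomes locally constant, $\nabla\varphi=Ax(\cdot)$ is locally constant, and $\varphi$ is locally affine --- so the claimed \emph{global} modulus $\kappa_1/\alpha_m$ in \eqref{Strongly_convex_q} cannot hold on all of $\mathbb{R}^{p}$ without an additional hypothesis (e.g.\ that the relevant dual iterates stay in a region where $x(\lambda)\in\operatorname{int}X$, or that the inner problem is effectively unconstrained). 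Since Theorem~\ref{Convergence} invokes \eqref{Strongly_convex_q} along the whole trajectory $\lambda(t)$, the fix you propose (restricting to a set of $\lambda$ with interior minimizers, or adding such a structural assumption) is exactly what the lemma --- and the paper --- needs; your instinct to isolate this as the main difficulty is correct.
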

\begin{proof}
	Because $f(x)$ is differentiable, $\varphi(\lambda)$ is continuously differentiable by \cite[Proposition 6.1.1]{Bertsekas2008Nonlinear}. The gradient of $\varphi(\lambda)$ is $ \nabla \varphi(\lambda)=Ax $.
	From \cite[Equation (6.9)]{Bertsekas2008Nonlinear}, the Hessian of $\varphi(\lambda)$ is
	\begin{align}\label{Hessian}
		\nabla^{2} \varphi(\lambda)=-A H^{-1} A^\mathrm{T}
	\end{align}
	For any $z\in\mathbb{R}^p$, we have 
	\begin{align}
		-z^\mathrm{T}\nabla^{2} \varphi(\lambda)z&=z^\mathrm{T}A H^{-1} A^\mathrm{T}z\nonumber\\
		&\ge \frac{1}{\alpha_{m}}z^\mathrm{T}A A^\mathrm{T}z \nonumber\\
		&\ge \frac{\kappa_1}{\alpha_{m}}z^\mathrm{T} z
	\end{align}
	where the second inequality is due to Assumption \ref{Assumption_A}. This implies that $ -\varphi(\lambda) $ is $ \frac{\kappa_1}{\alpha_{m}} $-strongly convex, i.e., $ \varphi(\lambda) $ is $ \frac{\kappa_1}{\alpha_{m}} $-strongly concave. This completes the proof.
\end{proof}
From Lemma \ref{Strongly_convex}, taking any $ \lambda_1, \lambda_2 \in \mathbb{R}^p $, we have 
\begin{align}\label{Strongly_convex_q}
	\left\langle \lambda_1-\lambda_2, \nabla \varphi(\lambda_1)-\nabla \varphi(\lambda_2)\right\rangle \leq -\frac{\kappa_1}{\alpha_{m}}\|\lambda_1-\lambda_2\|^{2}
\end{align}
Then, the algorithm \eqref{Controller} is rewritten as

\noindent \textbf{P-PDGD:}
\begin{subequations}
	\setlength{\abovedisplayskip}{4pt}	
	\setlength{\belowdisplayskip}{4pt}
	\label{Controller_PPD}
	\begin{align}
	\label{Controller_PPD1}
	x&=\arg \min _{x\in X}\left\{f(x)+\lambda^\mathrm{T} A x\right\}  \\
	\label{Controller_PPD2}
	\dot y &\in P_{\mathcal{T}_{\Omega}\left(y \right)}\left(-\partial h\left(y \right)-B^\mathrm{T}\lambda\right)\\
	\dot\lambda&=\nabla \varphi(\lambda)+By-C
	\end{align}
\end{subequations}
In the rest of the paper, we will study the properties of the algorithm \eqref{Controller_PPD}.
\begin{remark}
	The algorithm \eqref{Controller_PPD} has some significant improvement from the initial version in \cite{Li:Connecting}. First, it considers the nonsmooth objective functions, where the Clark generalized gradient is utilized. Second, the set constraint is included, where the projection onto the tangent cone is adopted. Thus, the proposed algorithm is more general.
	
	In \eqref{Controller_PPD2}, the projection onto $ \mathcal{T}_{\Omega}(y) $ is applied. The reason is that the projection from	$\partial h\left(y\right)$ onto $\Omega$ may be a nonconvex differential inclusion. As a result, the existence of trajectories of \eqref{Controller_PPD} is not guaranteed, which sets a difficult barrier for convergence analysis.
\end{remark}

\section{Optimality and Exponential Convergence}
In this section, we first investigate the optimality of the equilibrium point of \eqref{Controller_PPD}. Then, we prove that the trajectory of dynamics \eqref{Controller_PPD} converges to the equilibrium point exponentially.
\subsection{Optimality}
Before proving the optimality, we introduce the existence of solutions to \eqref{Controller_PPD}.
\begin{lemma}\label{bounded}
	Suppose Assumptions \ref{Assumption_f}, \ref{Assumption_g}, and \ref{Assumption_A} hold. Consider dynamics \eqref{Controller_PPD}. 
	\begin{enumerate}
		\item If ${y}(0) \in \Omega$, then $y(t) \in \Omega$ for all $t \geq 0$.
		\item If $\lambda(0)$ is bounded, then $\lambda(t) $ is bounded for all $t \geq 0$.
		\item There exists a solution to \eqref{Controller_PPD}.
	\end{enumerate}
\end{lemma}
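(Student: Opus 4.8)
The three claims are naturally proved in order, each feeding into the next. For claim (1), the idea is that the right-hand side of \eqref{Controller_PPD2} is, by Lemma \ref{Equivalence}, equivalently a solution of $\dot y \in -\partial h(y) - B^\mathrm{T}\lambda - \mathcal{N}_\Omega(y)$, and the projection onto the tangent cone $\mathcal{T}_\Omega(y)$ always produces a velocity that points into (or along the boundary of) $\Omega$. More precisely, I would argue that $\Omega$ is (strongly) invariant: take the distance function $d_\Omega(y) = \operatorname{dist}(y,\Omega)^2$ (or $\|y - \mathcal{P}_\Omega(y)\|^2$), note it is zero on $\Omega$, and show that along any solution with $y(0)\in\Omega$ its Clarke/Dini derivative is $\le 0$, using the fact that $P_{\mathcal{T}_\Omega(y)}(v) \in \mathcal{T}_\Omega(y)$ and $\langle y - \mathcal{P}_\Omega(y), w\rangle \le 0$ for all $w\in\mathcal{T}_\Omega(y)$. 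Since $\Omega$ is closed and convex, this gives $y(t)\in\Omega$ for all $t\ge 0$. Alternatively, I can simply invoke the standard result for projected dynamical systems / sweeping processes (Brogliato et al., already cited) that $\Omega$ is invariant under \eqref{Tangent_cone_pro}.

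For claim (2), I would build a Lyapunov-type bounding argument for $\lambda$. Since $\varphi$ is $\frac{\kappa_1}{\alpha_m}$-strongly concave by Lemma \ref{Strongly_convex}, $\nabla\varphi(\lambda)$ is a contraction direction: letting $\bar\lambda$ be any fixed point of the $\lambda$-equation for a frozen $y$ (or, more cleanly, using \eqref{Strongly_convex_q} with $\lambda_2 = \lambda^*$ an equilibrium), we get $\langle \lambda - \lambda^*, \nabla\varphi(\lambda) - \nabla\varphi(\lambda^*)\rangle \le -\frac{\kappa_1}{\alpha_m}\|\lambda - \lambda^*\|^2$. Combined with $\dot\lambda = \nabla\varphi(\lambda) + By - C$ and the boundedness of $By - C$ over $y\in\Omega$ (which holds by claim (1) and compactness of $\Omega$), a standard estimate on $\frac{d}{dt}\tfrac12\|\lambda - \lambda^*\|^2$ shows it is bounded by a constant whenever $\|\lambda-\lambda^*\|$ is large, so $\|\lambda(t)-\lambda^*\|$ stays in a bounded set (in fact $\|\lambda(t)-\lambda^*\| \le \max\{\|\lambda(0)-\lambda^*\|, \frac{\alpha_m}{\kappa_1}\sup_{y\in\Omega}\|By - C\|\}$). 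One subtlety: this presumes an equilibrium $\lambda^*$ exists; if we do not yet want to assume that, we can instead center at $\nabla\varphi$'s implicit "target" — i.e. use strong concavity directly to bound $\langle\lambda, \dot\lambda\rangle$ from above for large $\|\lambda\|$ — which is essentially the same computation. Either way $x$ is then automatically bounded, since $x = \arg\min_{x\in X}\{f(x) + \lambda^\mathrm{T}Ax\}$ lies in the compact set $X$.

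Claim (3) then follows by assembling the pieces into Lemma \ref{Equivalence}. The state is $(y,\lambda)$ (with $x$ a static, Lipschitz function of $\lambda$ via the strongly convex inner minimization, by Assumption \ref{Assumption_f} and Berge/Danskin-type arguments). The combined dynamics have the form $(\dot y, \dot\lambda) \in P_{\mathcal{T}_{\Omega\times\mathbb{R}^p}}[\mathcal{F}(y,\lambda)]$ where $\mathcal{F}(y,\lambda) = \big(-\partial h(y) - B^\mathrm{T}\lambda,\ \nabla\varphi(\lambda) + By - C\big)$. I would check the hypotheses of Lemma \ref{Equivalence}: $\Omega\times\mathbb{R}^p$ is closed convex; $\partial h$ is upper semicontinuous with nonempty compact convex values (Clarke gradient of a Lipschitz function), $\nabla\varphi$ is continuous, so $\mathcal{F}$ is upper semicontinuous with nonempty compact convex values; and $\mathcal{F}$ is bounded on the relevant region because $y$ ranges over the compact set $\Omega$, $\lambda$ over a bounded set (claim 2), and $\partial h$ is locally bounded. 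Lemma \ref{Equivalence} then yields local existence, and the a priori bounds from claims (1)–(2) preclude finite-time blow-up, giving a solution on $[0,+\infty)$.

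The main obstacle, I expect, is handling the coupling in claim (2) cleanly without circular reference to an equilibrium whose existence has not yet been established — i.e., making the $\lambda$-boundedness estimate self-contained using only strong concavity of $\varphi$ and compactness of $\Omega$. A secondary technical point is justifying that $x(\lambda)$ is well-defined and continuous (single-valued by strong convexity of $f$ on the convex set $X$), so that the substitution of $\nabla\varphi(\lambda) = Ax$ into the $\lambda$-dynamics is legitimate and $\mathcal{F}$ is genuinely upper semicontinuous.
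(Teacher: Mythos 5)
Your proposal is correct and follows essentially the same route as the paper: invariance of $\Omega$ via the squared distance function $\tfrac12\|y-\mathcal{P}_{\Omega}(y)\|^{2}$ and the tangent--normal cone polarity, boundedness of $\lambda$ via a quadratic Lyapunov estimate using the $\tfrac{\kappa_1}{\alpha_m}$-strong concavity of $\varphi$ together with compactness of $\Omega$, and existence via Lemma~\ref{Equivalence} using upper semicontinuity and boundedness of the Clarke gradient. The self-contained fix you flag for claim (2) --- centering the strong-concavity inequality at a fixed point such as $0$ rather than at an unestablished equilibrium $\lambda^{*}$ --- is exactly what the paper does.
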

\begin{proof}
	For 1), define a function  
	\begin{align}
	E\left(y(t)\right)=\frac{1}{2}\left\|y(t)-P_{\Omega}\left(y(t)\right)\right\|^{2}
	\end{align}
	Its gradient is \cite[Theorem 1.5.5]{facchinei2003finite}
	\begin{align}
	\nabla E\left(y(t)\right)=y(t)-P_{\Omega}\left(y(t)\right)
	\end{align}
	The time derivative of $ E\left(y(t)\right) $ is
	\begin{align}
	\dot E\left(y(t)\right)=\left\langle\nabla E\left(y(t)\right),\dot{y}(t)\right\rangle=\left\langle y(t)-P_{\Omega}\left(y(t)\right),\dot{y}(t)\right\rangle
	\end{align}
	Because $ y(t)-P_{\Omega}(y(t)) \in \mathcal{N}_{\Omega}(y(t)) $ and $ \dot{y}(t) \in \mathcal{T}_{\Omega}\left(y(t)\right) $, we have $ \dot E\left(y(t)\right)\le0 $. This implies that $ E\left(y(t)\right) $ is non-increasing. From $ E\left(y(t)\right)\ge0,\ \forall t\ge0 $ and $ E\left(y(0)\right)=0 $, we have $ E\left(y(t)\right)=0,\ \forall t\ge0  $, i.e., $ y(t)=P_{\Omega}\left(y(t)\right), \ \forall t\ge0 $. 
	
	For 2), define a function
	\begin{align}
		\tilde E\left(\lambda(t)\right)=\frac{1}{2}\left\|\lambda(t)\right\|^{2}
	\end{align}
	The time derivative of $\tilde E\left(\lambda(t)\right)$ along \eqref{Controller_PPD} is 
	\begin{align}
		\dot{\tilde E}(\lambda) &=\lambda^{\rm T}(\nabla \varphi(\lambda)+By-C)\nonumber\\
		&=\lambda^{\rm T}\nabla \varphi(\lambda)+\lambda^{\rm T}(By-C)\nonumber\\
		&\le -\frac{\kappa_1}{\alpha_{m}}\|\lambda\|^{2} + \lambda^{\rm T}\nabla \varphi(0)+\lambda^{\rm T}(By-C)\nonumber\\
		&\le -\frac{\kappa_1}{\alpha_{m}}\|\lambda\|^{2} + a_\lambda\|\lambda\| \nonumber\\
		&= -\frac{2\kappa_1}{\alpha_{m}} \tilde E\left(\lambda\right)+a_\lambda\sqrt{2\tilde E\left(\lambda\right)}
	\end{align}
	where $ a_\lambda=\max_{y\in\Omega}\left(\|\nabla \varphi(0)\|+\|By-C\|\right) $, and the first inequality is due to $ \left\langle \lambda, \nabla \varphi(\lambda)\right\rangle \leq  -\frac{\kappa_1}{\alpha_{m}}\|\lambda\|^{2} + \lambda^{\rm T}\nabla \varphi(0)$. The second inequality is due to the boundedness of $y(t)$.
	Then, we have $ \|\tilde{E}(\lambda(t))\|\le \max\left\{\tilde{E}(\lambda(0)),\frac{{{\alpha ^2_m}{a^2_\lambda }}}{{2{\kappa^2 _1}}}\right\} $. Thus, $\tilde{E}(\lambda(t)), t \geq 0$ is bounded, so is $\lambda(t), t \geq 0$. 
	
	Because $h(y)$ is Lipschitz, $ \partial h\left(y \right) $ is nonempty, compact, convex, and upper semicontinuous \cite[Proposition 6]{cortes2008discontinuous}. 
	From 1) and 2), we have $(y(t),\lambda(t))$ is bounded. Then, by Lemma \ref{Equivalence}, we can prove 3). 
	
	This completes the proof.	
\end{proof}

\begin{remark}
	Lemma \ref{bounded} shows that the trajectory of $y(t)$ will stay in $\Omega$ as long as $y(0)\in \Omega$. This is very important in practice besides paving the way for convergence proof. Many domain constraints are hard limits, which cannot be violated even in the transient process \cite{trip2019optimal}. For example, the power generation limits of generators and capacity limits of inverters cannot be violated physically \cite{Distributed_I:Wang,Distributed_II:Wang}. The voltage limits should not be violated, otherwise, it is dangerous for system operators \cite{wang2019unified}. Thus, the results in the paper can be applied to many practical problems.
\end{remark}

Let $ (x^*, y^*, \lambda^* ) $ be an equilibrium of \eqref{Controller_PPD}. Then 
\begin{subequations}\label{Equilibrium}
	\begin{align}
	\label{Equilibrium1}
	x^*&=\arg \min _{x\in X}\left\{f(x)+(\lambda^*)^\mathrm{T} A x\right\}  \\
	\label{Equilibrium2}
	0 &\in P_{\mathcal{T}_{\Omega}\left(y^* \right)}\left(-\partial h\left(y^* \right)-B^\mathrm{T}\lambda^*\right)\\
	\label{Equilibrium3}
	0&=Ax^*+By^*-C
	\end{align}
\end{subequations}

\begin{theorem}\label{Optimality}
	Suppose Assumptions \ref{Assumption_Slater}, \ref{Assumption_f}, \ref{Assumption_g} hold. The point $ (x^*, y^*, \lambda^* ) $ satisfies \eqref{Equilibrium}, if and only if it is the primal-dual optimal solution to \eqref{OLC} and its dual problem.
\end{theorem}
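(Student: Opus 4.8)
The plan is to establish the equivalence between the equilibrium conditions \eqref{Equilibrium} and the KKT conditions of \eqref{OLC}. Since Slater's condition (Assumption \ref{Assumption_Slater}) holds and the problem is convex with strongly convex objective terms, strong duality holds and the KKT conditions are both necessary and sufficient for primal-dual optimality. So it suffices to show that \eqref{Equilibrium} is equivalent to the KKT system: $x^* = \arg\min_{x\in X}\{f(x) + (\lambda^*)^\mathrm{T}Ax\}$, $y^* \in \arg\min_{y\in\Omega}\{h(y) + (\lambda^*)^\mathrm{T}By\}$, and $Ax^* + By^* - C = 0$. The first and third conditions match \eqref{Equilibrium1} and \eqref{Equilibrium3} verbatim, so the entire content of the theorem reduces to showing that \eqref{Equilibrium2} is equivalent to $y^*$ minimizing $h(y) + (\lambda^*)^\mathrm{T}By$ over $\Omega$, i.e., to the variational inclusion $0 \in \partial h(y^*) + B^\mathrm{T}\lambda^* + \mathcal{N}_\Omega(y^*)$.

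The key step is therefore to prove: for a closed convex set $\Omega$ and a vector $w$ (here $w$ ranges over $-\partial h(y^*) - B^\mathrm{T}\lambda^*$, a compact convex set),
\[
0 \in P_{\mathcal{T}_\Omega(y^*)}\!\left(-\partial h(y^*) - B^\mathrm{T}\lambda^*\right) \quad\Longleftrightarrow\quad 0 \in \partial h(y^*) + B^\mathrm{T}\lambda^* + \mathcal{N}_\Omega(y^*).
\]
For this I would invoke Lemma \ref{Equivalence} with $\mathcal{F}(y) = -\partial h(y) - B^\mathrm{T}\lambda^*$: it states precisely that the normal-cone-projected inclusion $\dot y \in \mathcal{F}(y) - \mathcal{N}_\Omega(y)$ and the tangent-cone-projected inclusion $\dot y \in P_{\mathcal{T}_\Omega}[\mathcal{F}(y)]$ have the same trajectories, hence the same equilibria; an equilibrium of the former is a $y^*$ with $0 \in \mathcal{F}(y^*) - \mathcal{N}_\Omega(y^*)$, i.e. $0 \in -\partial h(y^*) - B^\mathrm{T}\lambda^* - \mathcal{N}_\Omega(y^*)$, which (using $\mathcal{N}_\Omega(y^*) = -\mathcal{N}_\Omega(y^*)$ as a set only when $\mathcal{N}$ is a subspace — more carefully, rearranging) gives $B^\mathrm{T}\lambda^* + g \in -\mathcal{N}_\Omega(y^*) \subseteq \mathcal{N}_\Omega(y^*)$ ... so I would instead argue directly from the formula \eqref{Compute_tangent}: $0 \in P_{\mathcal{T}_\Omega(y^*)}(v)$ for some $v \in \mathcal{F}(y^*)$ means $v = \varpi z^*$ with $z^* \in \mathcal{N}_\Omega(y^*)$ and $\varpi \ge 0$, hence $v \in \mathcal{N}_\Omega(y^*)$; conversely if $v \in \mathcal{N}_\Omega(y^*)$ then $P_{\mathcal{T}_\Omega(y^*)}(v) = 0$ since $\mathcal{T}_\Omega$ and $\mathcal{N}_\Omega$ are mutually polar. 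Either route closes the gap.

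The remaining pieces are routine: the equilibrium condition \eqref{Equilibrium1}, by the first-order optimality condition for the strongly convex problem $\min_{x\in X}\{f(x)+(\lambda^*)^\mathrm{T}Ax\}$, is equivalent to $0 \in \nabla f(x^*) + A^\mathrm{T}\lambda^* + \mathcal{N}_X(x^*)$, which is exactly the $x$-stationarity KKT condition; \eqref{Equilibrium3} is primal feasibility of the affine constraint; and the translated \eqref{Equilibrium2} is $y$-stationarity. Assembling these three gives the full KKT system for \eqref{OLC}, and then strong duality under Assumption \ref{Assumption_Slater} (via \cite{boyd2004convex}) yields the equivalence with primal-dual optimality in both directions. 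I expect the main obstacle to be the careful handling of the equivalence in the second paragraph — specifically, making the passage between the tangent-cone projection being zero and membership in the normal cone fully rigorous, since it relies on the polarity relation and the explicit projection formula \eqref{Compute_tangent} rather than on a one-line citation; everything else is a direct translation through standard convex-analysis facts already recorded in Section \ref{Preliminaries}.
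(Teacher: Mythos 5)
Your proposal is correct and follows essentially the same route as the paper: write down the KKT system, match \eqref{Equilibrium1} and \eqref{Equilibrium3} to stationarity in $x$ and primal feasibility, and use Lemma \ref{Equivalence} (or, as you also do, the explicit formula \eqref{Compute_tangent}) to identify \eqref{Equilibrium2} with $0 \in \partial h(y^*)+B^\mathrm{T}\lambda^*+\mathcal{N}_{\Omega}(y^*)$, then invoke convexity and Slater's condition. Your worry about needing $\mathcal{N}_{\Omega}(y^*)=-\mathcal{N}_{\Omega}(y^*)$ is unfounded: $0\in\mathcal{F}(y^*)-\mathcal{N}_{\Omega}(y^*)$ with $\mathcal{F}(y^*)=-\partial h(y^*)-B^\mathrm{T}\lambda^*$ already reads, after moving the normal-cone element to the other side, exactly as the KKT inclusion with the correct sign, so no symmetry of the cone is required.
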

\begin{proof}
	According to the Karush-Kuhn-Tucker (KKT) optimal conditions \cite[Theorem 3.25]{ruszczynski2006nonlinear}, the primal-dual optimal solution should satisfy
	\begin{subequations}\label{KKT}
		\begin{align}
		\label{KKT1}
		0&\in\nabla f(x^*)+A^\mathrm{T}\lambda^*+\mathcal{N}_{X}(x^*) \\
		\label{KKT2}
		0 &\in \partial h\left(y^* \right)+B^\mathrm{T}\lambda^*+\mathcal{N}_{\Omega}(y^*)\\
		\label{KKT3}
		0&=Ax^*+By^*-C
		\end{align}
	\end{subequations} 
	Compare \eqref{KKT} with \eqref{Equilibrium}, and we know \eqref{KKT1} is equivalent to \eqref{Equilibrium1}. By Lemma \ref{Equivalence}, we have \eqref{KKT2} is equivalent to \eqref{Equilibrium2}. Thus, \eqref{KKT} is equivalent to \eqref{Equilibrium}. Because the optimization problem \eqref{OLC} is convex and with strongly convex objective functions, $ (x^*, y^*, \lambda^* ) $ is the primal-dual optimal solution to \eqref{OLC} and its dual problem. This completes the proof. 
\end{proof}

\subsection{Exponential convergence}

In this subsection, we analyze the convergence rate for algorithm dynamics \eqref{Controller_PPD}.

\begin{theorem}\label{Convergence}
	Suppose Assumptions \ref{Assumption_Slater}, \ref{Assumption_f}, \ref{Assumption_g} hold.  The solution algorithm dynamics \eqref{Controller_PPD} converges to its equilibrium point $ (x^*, y^*, \lambda^* ) $ exponentially.
\end{theorem}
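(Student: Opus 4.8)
The plan is to build a smooth quadratic Lyapunov function in $(y,\lambda)$ and show it decays linearly along the differential inclusion, then recover $x$ by a static Lipschitz estimate. Lemma~\ref{bounded} already guarantees a solution on $[0,+\infty)$ with $y(t)\in\Omega$ and $\lambda(t)$ bounded, so all the inequalities below need only hold for almost every $t$; and since the candidate will depend only on the absolutely continuous components $y,\lambda$ and is $C^{1}$, its set-valued Lie derivative reduces to the ordinary derivative evaluated at whichever selection realizes $\dot y$ at time $t$. Fix the equilibrium $(x^{*},y^{*},\lambda^{*})$ from Theorem~\ref{Optimality} and set $V(y,\lambda)=\frac12\|y-y^{*}\|^{2}+\frac12\|\lambda-\lambda^{*}\|^{2}$.

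For the multiplier, subtract the equilibrium identity $0=\nabla\varphi(\lambda^{*})+By^{*}-C$ from $\dot\lambda=\nabla\varphi(\lambda)+By-C$ to get $\dot\lambda=(\nabla\varphi(\lambda)-\nabla\varphi(\lambda^{*}))+B(y-y^{*})$, hence by \eqref{Strongly_convex_q}, $\langle\lambda-\lambda^{*},\dot\lambda\rangle\le-\frac{\kappa_{1}}{\alpha_{m}}\|\lambda-\lambda^{*}\|^{2}+(\lambda-\lambda^{*})^{\mathrm T}B(y-y^{*})$. For the $y$-block, I would use Lemma~\ref{Equivalence} to rewrite \eqref{Controller_PPD2} as $\dot y=-g_{h}(y)-B^{\mathrm T}\lambda-\eta$ with $g_{h}(y)\in\partial h(y)$ and $\eta\in\mathcal{N}_{\Omega}(y)$ (the selection active at $t$), and apply Lemma~\ref{Equivalence} to \eqref{Equilibrium2} to get $g_{h}^{*}+B^{\mathrm T}\lambda^{*}+\eta^{*}=0$ with $g_{h}^{*}\in\partial h(y^{*})$, $\eta^{*}\in\mathcal{N}_{\Omega}(y^{*})$. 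Expanding $\langle y-y^{*},\dot y\rangle$ and substituting $g_{h}^{*}=-B^{\mathrm T}\lambda^{*}-\eta^{*}$, the subgradient difference contributes $-\langle y-y^{*},g_{h}(y)-g_{h}^{*}\rangle\le-\beta\|y-y^{*}\|^{2}$ by Assumption~\ref{Assumption_g} (which holds for every pair of subgradients), while $\langle y-y^{*},\eta\rangle\ge0$ (as $y^{*}\in\Omega$) and $\langle y-y^{*},\eta^{*}\rangle\le0$ (as $y\in\Omega$) so both normal-cone terms are discarded, leaving $\langle y-y^{*},\dot y\rangle\le-\beta\|y-y^{*}\|^{2}-(y-y^{*})^{\mathrm T}B^{\mathrm T}(\lambda-\lambda^{*})$.

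Adding the two estimates, the bilinear terms $(\lambda-\lambda^{*})^{\mathrm T}B(y-y^{*})$ and $-(y-y^{*})^{\mathrm T}B^{\mathrm T}(\lambda-\lambda^{*})$ cancel identically, yielding $\dot V\le-\beta\|y-y^{*}\|^{2}-\frac{\kappa_{1}}{\alpha_{m}}\|\lambda-\lambda^{*}\|^{2}\le-2\rho\,V$ with $\rho=\min\{\beta,\kappa_{1}/\alpha_{m}\}$; a standard comparison argument then gives $V(t)\le V(0)e^{-2\rho t}$, so $(y(t),\lambda(t))\to(y^{*},\lambda^{*})$ exponentially with an explicit rate. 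To finish, $x$ is a static map of $\lambda$ through \eqref{Controller_PPD1}: its optimality condition is $-\nabla f(x)-A^{\mathrm T}\lambda\in\mathcal{N}_{X}(x)$, and monotonicity of $\mathcal{N}_{X}$ plus the $\alpha$-strong convexity of $f$ (Assumption~\ref{Assumption_f}) give $\|x(\lambda_{1})-x(\lambda_{2})\|\le\frac{\|A\|}{\alpha}\|\lambda_{1}-\lambda_{2}\|$; since $x^{*}=x(\lambda^{*})$, we get $\|x(t)-x^{*}\|\le\frac{\|A\|}{\alpha}\|\lambda(t)-\lambda^{*}\|$, which also decays exponentially, so the whole trajectory converges exponentially to $(x^{*},y^{*},\lambda^{*})$.

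I expect the main obstacle to be the careful bookkeeping around the $y$-dynamics rather than the Lyapunov algebra: one must invoke Lemma~\ref{Equivalence} both along the trajectory and at the equilibrium to move between the tangent-cone projection and the normal-cone (differential-inclusion) form, choose the subgradient and normal-cone selections consistently, use Assumption~\ref{Assumption_g} in its selection-free form, and pin down the signs of the two inner products $\langle y-y^{*},\eta\rangle$ and $\langle y-y^{*},\eta^{*}\rangle$ so that they are dropped and not retained. Once those inclusions are handled, the cross-term cancellation and the Lipschitz bound for $x$ are routine.
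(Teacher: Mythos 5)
Your proof is correct and follows essentially the same route as the paper: the identical Lyapunov function $V=\frac12\|y-y^{*}\|^{2}+\frac12\|\lambda-\lambda^{*}\|^{2}$, the normal-cone form of the $y$-dynamics from Lemma~\ref{Equivalence} to bound $\langle y-y^{*},\dot y\rangle$, strong monotonicity of $\partial h$ and strong concavity of $\varphi$ via \eqref{Strongly_convex_q}, cancellation of the bilinear $B$-terms, and Gr\"onwall; your rate $2\rho$ equals the paper's $\gamma$. The one substantive addition is your final Lipschitz estimate $\|x(\lambda)-x^{*}\|\le\frac{\|A\|}{\alpha}\|\lambda-\lambda^{*}\|$, which the paper omits even though the theorem asserts convergence of $x$ as well --- that step is valid and actually completes the claim.
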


\begin{proof}
Define the Lyapunov function candidate
\begin{align}
	V(y,\lambda)=\frac{1}{2}\|y-y^{*}\|^2+ \frac{1}{2}\|\lambda-\lambda^{*}\|^2
\end{align}

The time derivative of $ V(y,\lambda) $ is
\begin{align}
	&\mathcal{L}_{\mathcal{F}} V(y, \lambda)=\left\{a \in \mathbb{R}: \right.\nonumber\\ 
	&\qquad  a=\nabla_{y} V(y, \lambda)^{\mathrm{T}} P_{\mathcal{T}_{\Omega}\left(y(t)\right)}\left(-\partial h\left(y(t)\right)-B^\mathrm{T}\lambda\right) \nonumber\\ 
	&\qquad \left. +\nabla_{\lambda} V(y, \lambda)^{\mathrm{T}}(\nabla \varphi(\lambda)+By-C)\right\}
\end{align}

Suppose $a \in \mathcal{L}_{\mathcal{F}} V(y, \lambda)$. There is $\eta(y) \in \partial h(y)$ such that
\begin{align} 	
	a=&\left(y-y^{*}\right)^\mathrm{T}P_{\mathcal{T}_{\Omega}\left(y(t)\right)}\left(-\eta(y)-B^\mathrm{T}\lambda\right)\nonumber \\ 
	&+\left(\lambda-\lambda^{*}\right)^\mathrm{T}(\nabla \varphi(\lambda)+By-C)
\end{align}

From \eqref{Normal_cone_pro}, we have 
\begin{align}
	-\eta(y)-B^\mathrm{T}\lambda-\dot{y} \in \mathcal{N}_{\Omega}(y)
\end{align}
From the definition of $ \mathcal{N}_{\Omega}(y(t)) $ and the fact that $ y^* \in \Omega $, we have 
\begin{align}
	&\left\langle y^{*}-y,-\eta(y)-B^\mathrm{T}\lambda-\dot{y}\right\rangle \le 0\nonumber\\
	&\left\langle y-y^{*},\dot{y}\right\rangle \le \left\langle y-y^{*},-\eta(y)-B^\mathrm{T}\lambda\right\rangle
\end{align}

Then
\begin{align}\label{Inequality3}
	a\le&\left(y-y^{*}\right)^\mathrm{T}\left(-\eta(y)-B^\mathrm{T}\lambda\right)\nonumber \\ 
	&+\left(\lambda-\lambda^{*}\right)^{{T}}(\nabla \varphi(\lambda)+By-C)
\end{align}
From \eqref{KKT}, the definition of $ \mathcal{N}_\Omega(x) $, and the fact that $y(t)\in\Omega, \forall t$, at the equilibrium, we have 
\begin{subequations}
	\setlength{\abovedisplayskip}{4pt}	
	\setlength{\belowdisplayskip}{4pt}
	\label{Eqlibr}
	\begin{align}
	\label{Eqlibr1}
	&\left\langle-\eta\left(y^*\right)-B^\mathrm{T}\lambda^*,y-y^* \right\rangle\le 0\\
	\label{Eqlibr2}
	&0=\nabla \varphi(\lambda^*)+By^*-C
	\end{align}
\end{subequations}

Combine \eqref{Eqlibr} with \eqref{Inequality3}, and we have 
\begin{align} 	
	a&\le\left(y-y^{*}\right)^\mathrm{T}\left(-(\eta(y)-\eta\left(y^*\right))-B^\mathrm{T}(\lambda-\lambda^*)\right)\nonumber \\ 
	&\quad+\left(\lambda-\lambda^{*}\right)^{{T}}(\nabla \varphi(\lambda)-\nabla \varphi(\lambda^*)+By-By^*)\nonumber \\ 
	&\le -\left(y-y^{*}\right)^\mathrm{T}\left( \eta(y)-\eta\left(y^*\right )\right)\nonumber\\
	&\quad+\left(\lambda-\lambda^{*}\right)^{{T}}(\nabla \varphi(\lambda)-\nabla \varphi(\lambda^*)) \nonumber\\
	&\le -\beta\|y-y^{*}\|^{2}-\frac{\kappa_1}{\alpha_{m}}\|\lambda-\lambda^{*}\|^{2}\nonumber\\
	&\le -\gamma V
\end{align}
where $\gamma=\min\left\{2\beta,\ 2\frac{\kappa_1}{\alpha_{m}}\right\}$. As a result, $V(t) \leq V(0) e^{-\gamma t}$, and we have $ \left\| \left({y\left( t \right) - {y^*}}, {\lambda \left( t \right) - {\lambda ^*}}\right) \right\|\le \sqrt{2V(0)} e^{-\frac{\gamma}{2} t} $ converges to $(y^*,\lambda^*)$ exponentially, and convergence rate is no less than $ \frac{\gamma}{2} $. This completes the proof.
\end{proof}
\begin{remark}[Decaying rate]\label{Decaying_rate}
	Give some $\tau>0$, the P-PDGD can be written as
	\begin{subequations}
		\setlength{\abovedisplayskip}{4pt}	
		\setlength{\belowdisplayskip}{4pt}
		\label{Controller_PPD_tau}
		\begin{align}
		\label{Controller_PPD1_tau}
		x&=\arg \min _{x\in X}\left\{f(x)+\lambda^\mathrm{T} A x\right\}  \\
		\label{Controller_PPD2_tau}
		\dot y &\in \tau P_{\mathcal{T}_{\Omega}\left(y \right)}\left(-\partial h\left(y \right)-B^\mathrm{T}\lambda\right)\\
		\dot\lambda&=\tau(\nabla \varphi(\lambda)+By-C)
		\end{align}
	\end{subequations}
	Then, the Lyapunov function is 
	\begin{align}
		V_2(y,\lambda)=\frac{1}{2\tau}\|y-y^{*}\|^2+ \frac{1}{2\tau}\|\lambda-\lambda^{*}\|^2
	\end{align}
	
	Follow the similar analysis in Theorem \ref{Convergence}, and we have 
	\begin{align} 	
	a&\le -\frac{1}{2}\gamma \left\| \left({y\left( t \right) - {y^*}}, {\lambda \left( t \right) - {\lambda ^*}}\right) \right\|^2 \nonumber\\
	&\le -\gamma\tau V_2(y,\lambda)
	\end{align}
	As a result, $V_2(t) \leq V_2(0) e^{-\gamma\tau t}$, and we have $ \left\| \left({y\left( t \right) - {y^*}}, {\lambda \left( t \right) - {\lambda ^*}}\right) \right\|\le \sqrt{2\tau V_2(0)} e^{-\frac{\gamma\tau}{2}t} $. Thus, the decaying rate bound can be regulated by $\tau$. If $\tau=1$, it will be same as that in Theorem \ref{Convergence}. 
\end{remark}

\section{Illustrative Examples}
We consider a convex problem that models the optimal voltage control in distribution networks. The problem is formulated as
\begin{subequations}\label{eq_opt2}  
	\setlength{\abovedisplayskip}{4pt}	
	\setlength{\belowdisplayskip}{4pt}
	\begin{align}
	\min\limits_{{U}, {q}\in \mathbb{R}^n} &\quad   f=\frac{a}{2}\|{U}-\textbf{1}\|^2 + \sum\nolimits_{j=1}^n h_j(q_j)
	\label{eq_opt2a}
	\\ 
	\text{s.t.}  
	&  \quad	{B}{U}={q}+C
	\label{eq_opt2b} \\
	&  \quad	\underline{q}\le {q}\le \overline{q} \label{eq_opt2f} 
	\end{align}   
\end{subequations}
where $ U $ is the voltage, $q$ is the reactive power. $ {B}{U}={q}+C $ is the linear model derived from the DistFlow equations \cite{baran1989optimal}. $ \underline{q}, \overline{q} $ are lower and upper bound of $q$. $a$ is a real positive constant. The first part of objective function, $ \frac{a}{2}\|{U}-\textbf{1}\|^2 $, is the voltage difference, where $ \textbf{1} $ is the nominal voltage. The second part, $ h_j(q_j) $, is the regulation cost of reactive power, which is strongly convex and nonsmooth, defined by  
\begin{equation}
	\setlength{\abovedisplayskip}{4pt}	
	\setlength{\belowdisplayskip}{4pt}
	\begin{aligned}
	h_j(q_j)=\left\{ \begin{array}{l}
	q_j^2-0.02,\quad\ \ q_j\le -0.2\\
	\frac{1}{2}q_j^2 ,\quad \quad \qquad -0.2<q_j\le  0.2\\
	q_j^2-0.02,\quad\ \ 0.2<q_j
	\end{array} \right.
	\end{aligned}
\end{equation}
An 8-bus feeder is utilized as the test system, the detailed description is given in \cite{wang2019asynchronous}. The bus 0 is the substation with voltage as $1$ and has no generator. Other have reactive power regulation capability with  $\overline{{q}}=-\underline{{q}}=[80, 80, 88, 80, 104, 80, 96]$kVar. We set $a=8$, $ C=(1.011,-0.009,-0.1,0.14,-0.26,-0.019,-0.06)^{\rm T} $. The minimal eigenvalue of $B$ is $0.1165$. Moreover, we have $ \beta = 1, \kappa_1 = 0.1165^2 = 0.01357, \alpha_m = a = 8 $. Therefore $ \gamma = \min\{2\beta, 2\kappa_1/\alpha_m\} = \min\{2, 2\times0.01357/8\} = 0.00339 $.

Define $ \Omega\triangleq \{q:\underline{q}\le {q}\le \overline{q}\} $, and then the P-PDGD for \eqref{eq_opt2} is 
\begin{subequations}
	\setlength{\abovedisplayskip}{4pt}	
	\setlength{\belowdisplayskip}{4pt}
	\label{Controller_V}
	\begin{align}
	\label{Controller_V1}
	U&=\arg \min _{U}\left\{\frac{a}{2}\|U-\textbf{1}\|^2+\lambda^\mathrm{T}BU\right\}=-\frac{1}{a}B^\mathrm{T}\lambda+\textbf{1}  \\
	\label{Controller_V2}
	\dot q &\in P_{\mathcal{T}_{\Omega}\left(q \right)}\left(-\partial h\left(q \right)+\lambda\right)\\
	\dot\lambda&=-\frac{1}{a}B B^{\rm T}\lambda-q+B\times\textbf{1}-C
	\end{align}
\end{subequations}
The method in \eqref{Compute_tangent} is utilized to compute the projection onto $ {\mathcal{T}_{\Omega}\left(q \right)} $. The simulations are implemented on Matlab R2013b, and the function ``ode23tb" is adopted to solve \eqref{Controller_V}. 
The equilibrium point of \eqref{Controller_V} is
$$ q^*=(0.031, 0.031, 0.042, 0.075, 0.104, 0.035, 0.044)^{\rm T}, $$
$$U^*=(0.992, 1.003, 1.004, 1.031, 0.959, 1.004, 0.997)^{\rm T}.$$

The dynamic performance of \eqref{Controller_V} are given in Fig.\ref{Reactive_power} and Fig.\ref{Exponential_figure}. In Fig.\ref{Reactive_power}, the dynamics of reactive power are illustrated, which shows that $q_i(t), \forall i$ is within its lower and upper limits. This validates Lemma \ref{bounded}.
\begin{figure}[t]
	\setlength{\abovecaptionskip}{0pt}
	\centering
	\includegraphics[width=0.45\textwidth]{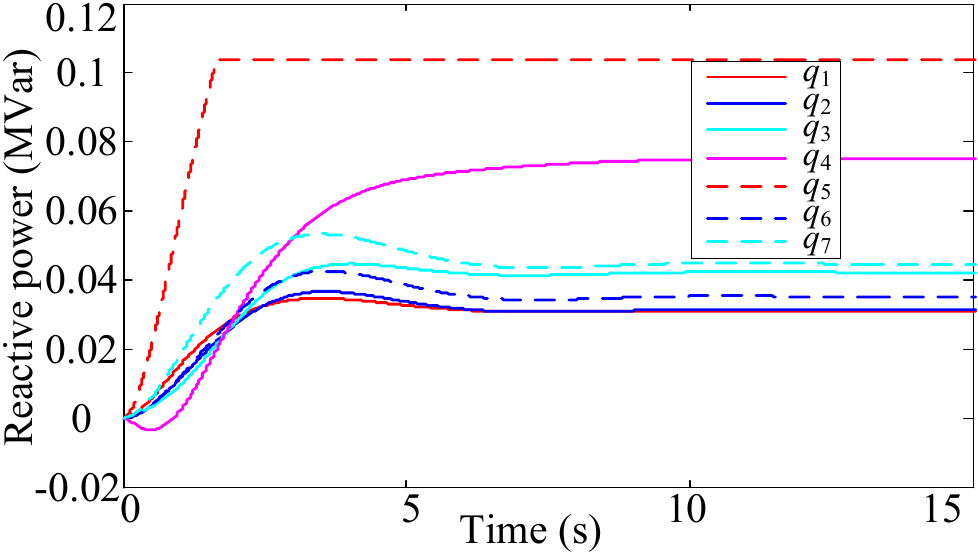}
	\caption{Dynamics of reactive power $ q $ at each bus}
	\label{Reactive_power}
\end{figure}

In Fig.\ref{Exponential_figure}, the convergence of $ \left\| \left({q\left( t \right) - {q^*}}, {\lambda \left( t \right) - {\lambda ^*}}\right) \right\| $ is illustrated. The dotted line is the trajectory of $ \sqrt {{{\left\| {q\left( 0 \right) - {q^*}} \right\|}^2} + {{\left\| {\lambda \left( 0 \right) - {\lambda ^*}} \right\|}^2}} e^ { - \gamma t/2}   $, which is the upper bound of exponential convergence obtained from Theorem \ref{Convergence}. The blue line is the trajectory of $ (q(t),\lambda(t)) $. It is shown that $ (q(t),\lambda(t)) $ converges to $ (q^*,\lambda^*) $ rapidly. The convergence speed is much faster than computed upper bound.
\begin{figure}[t]
	\setlength{\abovecaptionskip}{0pt}
	\centering
	\includegraphics[width=0.45\textwidth]{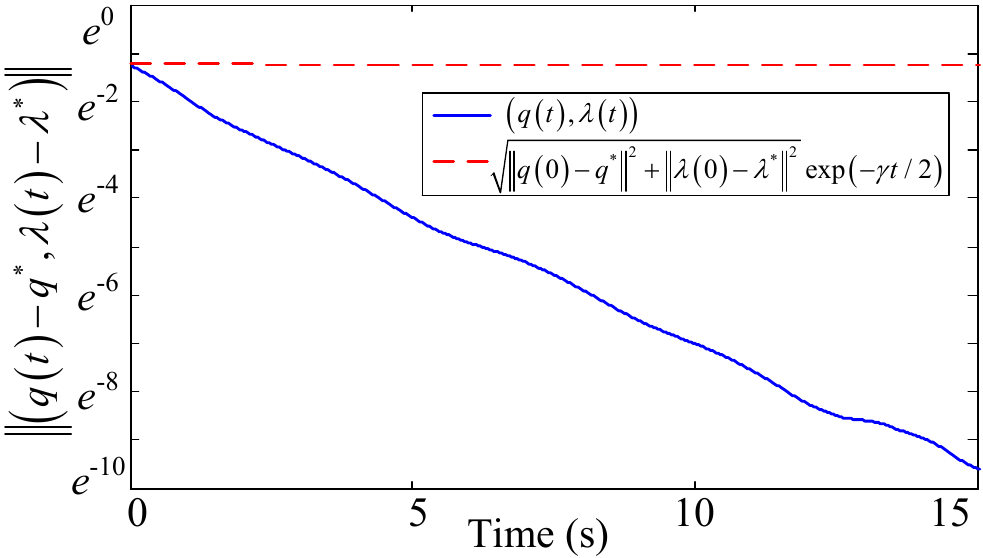}
	\caption{Illustration of the relative distances to $ (q^*, \lambda^*) $}
	\label{Exponential_figure}
\end{figure}

We further investigate the decaying rates under different $\tau$, which is illustrated in Fig.\ref{Exponential_different_tau}. With the increasing of $\tau$, the decaying rates also increase, and the exponential convergence always holds. The result is consistent with the analysis in Remark \ref{Decaying_rate}. 
\begin{figure}[t]
	\setlength{\abovecaptionskip}{0pt}
	\centering
	\includegraphics[width=0.45\textwidth]{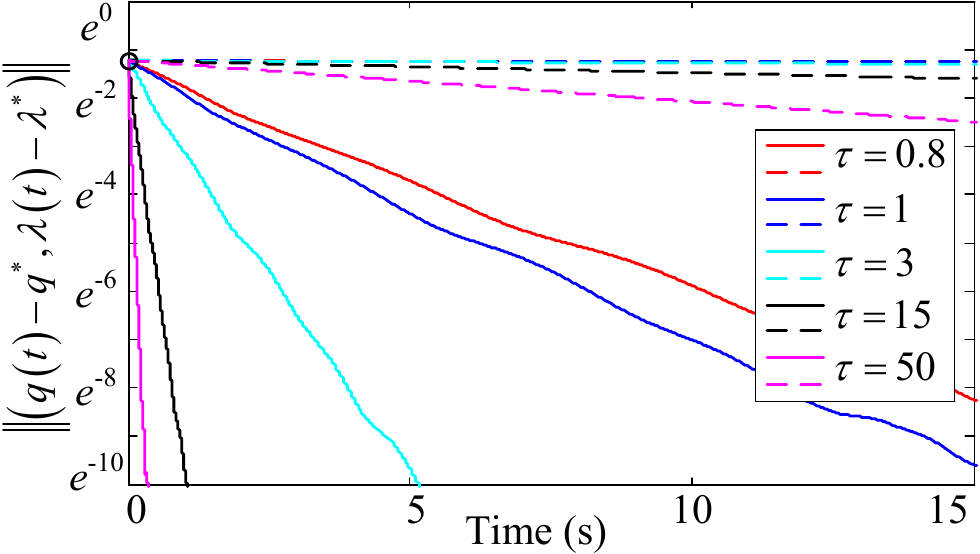}
	\caption{Illustration of the decaying rate with different $\tau$. The dotted lines are trajectories of $ \sqrt{2\tau V_2(0)} e^{-\frac{\gamma\tau}{2}t} $ with different $\tau$. The solid lines are the corresponding trajectories of $ \left\| \left({y\left( t \right) - {y^*}}, {\lambda \left( t \right) - {\lambda ^*}}\right) \right\| $. }
	\label{Exponential_different_tau}
\end{figure}

\section{Conclusion}\label{Conclusion}
This paper investigates the P-PDGD for solving convex optimization with strongly convex and non-smooth objectives. Affine equality and set constraints are considered. We prove the exponential stability of P-PDGD, where bounds on decaying rates are also provided. It is also validated that the algorithm is almost initialization free as long as the initial point satisfies the set constraints. 

Our results are promising in many practical problems, such as the frequency and voltage control in power systems, which can provide a stronger stability guarantee. However, there are still some limitations on the problem form. In the future, we will investigate exponentially convergent algorithms for more general optimization problems.


\bibliography{mybib}

\end{document}